\newtheorem{neu}{}[section]
\newtheorem*{Cor*}{Corollary}
\newtheorem{Thm}[neu]{Theorem}
\newtheorem{Theorem}{Theorem}
\newtheorem*{Thm*}{Theorem}
\newtheorem{Prop}[neu]{Proposition}
\newtheorem*{Prop*}{Proposition}
\theoremstyle{definition}
\newtheorem{Lemma}[neu]{Lemma}
\newtheorem*{Rmk*}{Remark}
\newtheorem{Rmk}[neu]{Remark}
\newtheorem*{Ex*}{Example}
\newtheorem*{Qu*}{Question}
\newtheorem*{Claim*}{Claim}
\newtheorem{Def}[neu]{Definition}
\newtheorem*{Conv*}{Convention}
\newcommand{\N}{\mathbb{N}}
\newcommand{\Z}{\mathbb{Z}}
\newcommand{\R}{\mathbb{R}}
\newcommand{\C}{\mathbb{C}}
\newcommand{\pf}{\longrightarrow}
\newcommand{\hpf}{\hookrightarrow}
\newcommand{\Morse}{\mu_{\mathrm{Morse}}}
\newcommand{\ind}{\mathrm{ind\,}}
\newcommand{\om}{\omega}
\newcommand{\ev}{\mathrm{ev}}
\newcommand{\evt}{\widetilde{\mathrm{ev}}}
\newcommand{\EVt}{\widetilde{\mathrm{EV}}}
\newcommand{\A}{\mathcal{A}}
\newcommand{\F}{\mathcal{F}}
\newcommand{\M}{\mathcal{M}}
\newcommand{\Mt}{\widetilde{\mathcal{M}}}
\renewcommand{\L}{\mathscr{L}}
\renewcommand{\H}{\mathrm{H}}
\newcommand{\Ham}{\mathrm{Ham}}
\newcommand{\CM}{\mathrm{CM}}
\newcommand{\Crit}{\mathrm{Crit}}
\newcommand{\p}{\partial}
\newcommand{\cl}{\mathrm{cl}}
\newcommand{\LI}{\mathcal{LI}}
\newcommand{\beq}{\begin{equation}}
\newcommand{\beqn}{\begin{equation}\nonumber}
\newcommand{\eeq}{\end{equation}}
\newcommand{\bea}{\begin{equation}\begin{aligned}}
\newcommand{\bean}{\begin{equation}\begin{aligned}\nonumber}
\newcommand{\eea}{\end{aligned}\end{equation}}
\newcommand{\Mp}{\mathfrak{M}}
\numberwithin{equation}{section}
\definecolor{Urs}{rgb}{0,.7,0}
\definecolor{Peter}{rgb}{0,0,1}
\definecolor{red}{rgb}{1,0,0}
\begin{document}
\title{Cup-length estimates for leaf-wise intersections}
\author{Peter Albers}
\author{Al Momin}
\address{
    Peter Albers\\
    Department of Mathematics\\
    Purdue University}
\email{palbers@math.purdue.edu}
\address{
    Al Momin\\
    Department of Mathematics\\
    Purdue University}
\email{amomin@math.purdue.edu}
\keywords{Rabinowitz Floer homology, leaf-wise intersections, cup-length estimates}
\subjclass[2000]{53D40, 37J10, 58f05}
\begin{abstract}
We prove that on a restricted contact type hypersurface the number of leaf-wise intersections is bounded from below by a certain cup-length.
\end{abstract}
\maketitle

\section{Introduction}

Let $(M,\om)$ be a  symplectic manifold and $\Sigma\subset M$ be a hypersurface. Then $\Sigma$ is foliated by the characteristic foliation induced by the line bundle $\ker\om|_\Sigma\to\Sigma$. We denote by $L_x$ the leaf through $x\in\Sigma$. Let $\psi\in\Ham(M)$ be a Hamiltonian diffeomorphisms. Then a leaf-wise intersection is a point $x\in\Sigma$ with the property that $\psi(x)\in L_x$. 

\begin{Def}\label{def:basic_quantities}
Let $\iota:\Sigma\hpf M$ be the inclusion map.
\begin{enumerate}
\item We denote by
\beq
\cl(\Sigma, M):=\max\{k\mid\exists\; a_1,...,a_k\in\H^{\geq1}(M;\Z/2)\text{ with }\iota^*\big(a_1\cup\ldots\cup a_k\big)\neq0\}
\eeq
the relative cup-length of $\Sigma$ in $M$.
\item Suppose $\omega = d\lambda$ and that $\Sigma$ is of restricted contact type, i.e.~$\alpha := \iota^*\lambda$ is a contact form on $\Sigma$.  Then we denote by $\wp(\Sigma,\alpha)>0$ the minimal period of a Reeb orbit of $(\Sigma,\alpha)$ which is contractible in $M$. If there exists no such Reeb orbit we set $\wp(\Sigma,\alpha)=\infty$. 
\item We denote by $\Ham_c(M)$ the space of Hamiltonian diffeomorphisms generated by compactly supported, time dependent Hamiltonian functions and by $||\psi||$ the Hofer norm.
\item $(M,\om)$ is called convex at infinity if it is isomorphic to the symplectization of a compact contact manifold at infinity.
\end{enumerate}
\end{Def}

\begin{Theorem}\label{thm:main}
Let $\Sigma\subset (M,d\lambda)$ be a closed, bounding, restricted contact type hypersurface and $(M,d\lambda)$ be convex at infinity. If $\psi\in\Ham_c(M)$ satisfies
$||\psi||<\wp(\Sigma,\alpha)$
then
\beq
\nu_{\mathrm{leaf}}(\psi):=\#\{\text{leaf-wise intersections of }\psi\}\geq\cl(\Sigma,M)+1\;.
\eeq
\end{Theorem}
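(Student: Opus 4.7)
The proof combines Rabinowitz Floer theory with a Lusternik--Schnirelmann argument. Fix a compactly supported time-periodic Hamiltonian $H_t$ generating $\psi$ and consider the perturbed Rabinowitz action
\beq
\A^H(v,\eta):=-\int v^*\lambda-\eta\int_0^1 F\bigl(v(t)\bigr)\,dt+\int_0^1 H_t\bigl(v(t)\bigr)\,dt
\eeq
on $\L M\times\R$, where $F$ is a defining function for $\Sigma$. A standard calculation shows that $(v,\eta)$ is a critical point iff $v(0)\in\Sigma$, $\psi(v(0))\in L_{v(0)}$, and $\eta$ is the Reeb time separating $\psi(v(0))$ from $v(0)$ along the leaf. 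Critical points with $\eta=0$ are therefore exactly the leaf-wise intersections of $\psi$, and after choosing $H$ whose Hofer energy is close to $||\psi||$ their action satisfies $|\A^H(v,0)|\leq||\psi||$.

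Critical points with $\eta\neq0$ correspond to Reeb chords of period at least $\wp(\Sigma,\alpha)$, so $|\A^H(v,\eta)|\geq\wp(\Sigma,\alpha)-||\psi||$. The assumption $||\psi||<\wp(\Sigma,\alpha)$ therefore isolates an open action window $I:=(-\wp(\Sigma,\alpha)+||\psi||,\wp(\Sigma,\alpha)-||\psi||)$ that contains every leaf-wise-intersection critical value and no Reeb-orbit critical value. This separation supplies the $C^0$-bound on Floer trajectories with action in $I$ that is needed to define the filtered Rabinowitz Floer homology $\HFe^{[a,b]}(\A^H)$ for $[a,b]\subset I$; convexity of $(M,d\lambda)$ at infinity together with the bounding hypothesis on $\Sigma$ takes care of the remaining compactness for the Floer equation.

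The algebraic engine consists of three ingredients: (i) PSS-type maps $\Phi_H\colon\H^*(M;\Z/2)\to\HFe(\A^H)$ that, as $H\to0$, factor through $\iota^*\colon\H^*(M)\to\H^*(\Sigma)$; (ii) a pair-of-pants module action of $\H^*(M)$ on $\HFe(\A^H)$ under which $\Phi_H$ is a module homomorphism; and (iii) the associated spectral invariants
\beq
c(a,H):=\inf\bigl\{\tau\in I\mid\Phi_H(a)\in\im\bigl(\HFe^{\leq\tau}(\A^H)\to\HFe(\A^H)\bigr)\bigr\},
\eeq
which satisfy the triangle inequality $c(a\cup b,H)\leq c(a,H)+c(b,0)$ and the nontriviality $c(a,H)<\infty$ whenever $\iota^*a\neq0$. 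Given $a_1,\ldots,a_k\in\H^{\geq1}(M;\Z/2)$ with $\iota^*(a_1\cup\cdots\cup a_k)\neq0$, iterating the triangle inequality produces a nonincreasing chain of $k+1$ spectral critical values $c(1,H)\geq c(a_1,H)\geq\cdots\geq c(a_1\cup\cdots\cup a_k,H)$ lying inside $I$.

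If the leaf-wise intersections of $\psi$ are infinite, the bound is trivial; otherwise the critical set of $\A^H$ in $I$ is discrete, and the classical Lusternik--Schnirelmann coincidence argument forces the $k+1$ spectral values above to be pairwise distinct, yielding $k+1$ distinct leaf-wise intersections. The principal technical obstacle is setting up items (i)--(iii) in the Rabinowitz--Lagrange-multiplier framework: constructing the pair-of-pants product on the essential complex $\CFe$, verifying that $\Phi_H$ is a genuine module homomorphism, and controlling the interaction between the module structure and the action filtration, so that the spectral triangle inequality actually holds. Once this algebraic scaffolding is built, the Lusternik--Schnirelmann step proceeds in the classical fashion.
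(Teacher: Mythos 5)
Your outline takes a genuinely different route than the paper, and before weighing the two approaches it contains a concrete error that must be fixed. You assert that critical points $(v,\eta)$ of $\A^H$ with $\eta=0$ ``are exactly the leaf-wise intersections'' and that critical points with $\eta\neq0$ ``correspond to Reeb chords of period at least $\wp(\Sigma,\alpha)$.'' Neither is correct for the perturbed functional. By Proposition~\ref{prop:critical_points_give_LI}, \emph{every} critical point of $\A^{(F,H)}$ produces a leaf-wise intersection $x=v(0)$; the Lagrange multiplier $\eta$ records the Reeb return time from $\psi(x)$ to $x$ along the leaf, which is in general nonzero, has no reason to be a period of a \emph{closed} Reeb orbit, and is certainly not bounded below by $\wp(\Sigma,\alpha)$. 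Your ``$\eta=0$'' characterization describes leaf-wise \emph{fixed} points, and your ``$|\eta|\geq\wp$'' bound applies only to critical points of the \emph{unperturbed} functional $\A^{(F,0)}$. Consequently the clean action window $I$ with ``leaf-wise-intersection critical values inside and Reeb critical values outside'' does not exist as stated: all critical values of $\A^{(F,H)}$ come from leaf-wise intersections, and the spectral gap coming from $\wp$ only enters through \emph{breaking} of Floer trajectories at closed Reeb orbits (this is exactly how it is used in the paper's compactness argument, Proposition~\ref{prop:compactness_moduli_spaces}).

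Beyond this, your plan hinges on three pieces of machinery — a PSS-type map, a pair-of-pants module action of $\H^*(M)$ on Rabinowitz Floer homology, and a spectral-invariant triangle inequality compatible with that module structure — which you yourself flag as ``the principal technical obstacle.'' These are not off-the-shelf in the Rabinowitz setting: the Lagrange multiplier does not split naturally over a pair of pants, so the product structure requires a genuine construction rather than a routine adaptation, and the interaction of such a product with the action filtration (needed for the triangle inequality) is precisely where the difficulty sits. The paper deliberately sidesteps all of this. Its argument is a direct ``neck-stretching'' of moduli spaces in the spirit of Floer and Schwarz: one builds a family $\M(R)$ of perturbed Rabinowitz trajectories with $k$ interior evaluation constraints, realizes the operation $b\mapsto\iota^*(a_1\cup\cdots\cup a_k)\cap b$ at $R=0$ (Propositions~\ref{prop:cohomology_op_1}, \ref{prop:cohomology_op_2}), and then lets $R\to\infty$. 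Nontriviality of the cup product forces $\M(n)\neq\emptyset$ for all $n$, and the $C^\infty_{loc}$-limits of $w_n(s+jn)$ for $j=0,\ldots,k+1$ break off $k$ nonconstant intermediate Floer trajectories whose endpoints are pairwise distinct critical points, hence $k+1$ distinct leaf-wise intersections. The strict ordering of actions \eqref{eqn:inequ_chain_for_action_values} plays the role your spectral-value chain was meant to play, but it is obtained geometrically from the constraint $\ev_n(w_n)\in W^s(x_1,f_1)\times\cdots\times W^s(x_k,f_k)$ and the generic avoidance of $\LI$, not from an algebraic triangle inequality. If you want to pursue your route you would first have to correct the critical-point dictionary and then actually construct the pair-of-pants module structure and its filtered properties; as written, the proposal has a gap at both steps.
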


\begin{Rmk}
The search for leaf-wise intersections was initiated by Moser in \cite{Moser_A_fixed_point_theorem_in_symplectic_geometry} and pursued further in 
\cite{Banyaga_On_fixed_points_of_symplectic_maps,
Hofer_On_the_topological_properties_of_symplectic_maps,
Ekeland_Hofer_Two_symplectic_fixed_point_theorems_with_applications_to_Hamiltonian_dynamics,
Ginzburg_Coisotropic_intersections,
Dragnev_Symplectic_rigidity_symplectic_fixed_points_and_global_perturbations_of_Hamiltonian_systems,
Albers_Frauenfelder_Leafwise_intersections_and_RFH,
Ziltener_coisotropic, Albers_Frauenfelder_Leafwise_Intersections_Are_Generically_Morse,
Albers_McLean_SH_and_infinitley_many_LI,
Gurel_leafwise_coisotropic_intersection,Kang_Existence_of_leafwise_intersection_points_in_the_unrestricted_case, Albers_Frauenfelder_Remark_on_a_Thm_by_Ekeland_Hofer,
Albers_Frauenfelder_Spectral_invariants_in_RFH, 
Merry_On_the_RFH_of_twisted_cotangent_bundles}.  We refer to \cite{Albers_Frauenfelder_Leafwise_Intersections_Are_Generically_Morse} for a brief history of the problem.
\end{Rmk}

\begin{Rmk}
The example $S^1\subset\C$ shows that if $||\psi||>\wp(\Sigma,\alpha)$ then $\nu_{\mathrm{leaf}}(\psi)=0$ is possible.
\end{Rmk}

\begin{Rmk}
The relative cup-length is smaller than the cup-length of $M$ and $\Sigma$. In the case $\Sigma=S^*B\subset T^*B$ is a unit cotangent bundle we have $\cl(S^*B, T^*B)\geq\cl(B)-1$ by examining the Gysin sequence.
\end{Rmk}

\begin{Rmk}
For simplicity we shall use $\Z/2$-coefficients for all (co-)homology theories in this paper. We expect that Theorem \ref{thm:main} continues to hold with $\Z$-coefficients.  
\end{Rmk}

\begin{Rmk}
Cup-length estimates have been established for Lagrangian intersections by Floer \cite{Floer_cuplength_estimates_on_Lagrangian_intersections}, Hofer \cite{Hofer_Lusterik_Schnirelman_theroy_for_Lagrangian_intersections}, and Liu \cite{Liu_cuplength_estimate_for_Lagrangian_intersections} in terms of the cup-length of the Lagrangian submanifold and for fixed points of Hamiltonian diffeomorphisms by Schwarz \cite{Schwarz_A_quantum_cup_length_estimate_for_symplectic_fixed_points} in terms of the quantum cup-length of the symplectic manifold.
\end{Rmk}

\section{Leaf-wise intersections  and the Rabinowitz action functional}

Let $(M,\om)$ be a symplectic manifold and $f\in C^\infty(M)$ an autonomous Hamiltonian function. Since energy is preserved the hypersurface $\Sigma:=f^{-1}(0)$ is invariant under the Hamiltonian flow $\phi_f^t$ of $f$. The Hamiltonian flow $\phi_f^t$ is generated by the Hamiltonian vector field $X_f$ which is uniquely defined by the equation $\om(X_f,\cdot)=df$. If $0$ is a regular value of $f$ the hypersurface is a coisotropic submanifold which is foliated by 1-dimensional isotropic leaves, see \cite[Section 3.3]{McDuff_Salamon_introduction_symplectic_topology}. If we denote by $L_x$ the leaf through $x\in\Sigma$ we have the equality
\beq
L_x=\bigcup_{t\in\R}\phi_f^t(x)\;.
\eeq
Given a time-dependent Hamiltonian function $H:[0,1]\times M\pf\R$ with Hamiltonian flow $\phi_H^t$ we are interested in points $x\in\Sigma$ with the property
\beq
\phi_H^1(x)\in L_x\;.
\eeq
This notion was introduced and studied by Moser in \cite{Moser_A_fixed_point_theorem_in_symplectic_geometry}. Such points are called leaf-wise intersections.  We recall some notions from \cite{Albers_Frauenfelder_Leafwise_intersections_and_RFH}.
\begin{Def}\label{def:periodic_LI}
A leaf-wise intersection $x\in\Sigma$ is called periodic if the leaf $L_x$ is a closed orbit of the flow $\phi_f^t$.
\end{Def}

\begin{Def}\label{def:Moser_pair}
A pair $\Mp=(F,H)$ of Hamiltonian functions $F,H:S^1\times M\pf\R$ is called a Moser pair if it satisfies
\beq
F(t,\cdot)=0\quad\forall t\in[0,\tfrac12]\qquad\text{and}\qquad H(t,\cdot)=0\quad\forall t\in[\tfrac12,1]\;,
\eeq
and $F$ is of the form $F(t,x)=\rho(t)f(x)$ for some smooth map $\rho:S^1\to [0,\infty)$ with $\int_0^1\rho(t) dt=1$ and $f:M\to\R$. 
\end{Def}

\begin{Def}\label{def:set_of_half_constant_Hamiltonians}
We set
\beq
\mathcal{H}:=\{H\in C^\infty(S^1\times M)\mid H\text{ has compact support and } H(t,\cdot)=0\quad\forall t\in[\tfrac12,1]\}
\eeq
\end{Def}

\begin{Rmk}
It is easy to see that the $\Ham(M,\om)\equiv\{\phi_H^1\mid H\in\mathcal{H}\}$, e.g.~\cite{Albers_Frauenfelder_Leafwise_intersections_and_RFH}.
\end{Rmk}

Let $(M,\om=d\lambda)$ be an exact symplectic manifold. Then for a Moser pair $\Mp=(F,H)$ the perturbed Rabinowitz action functional is defined by
\bea
\A^\Mp:\L_M\times\R&\pf\R\\
(v,\eta)&\mapsto-\int_{S^1}v^*\lambda-\int_0^1H(t,v)dt-\eta\int_0^1F(t,v)dt
\eea
where $\L_M:=C^\infty(S^1,M)$. A critical point $(v,\eta)$ of $\A^\Mp$ is a solution of 
\beq\label{eqn:critical_points_eqn}
\left. 
\begin{aligned}
\partial_tv=\eta X_F(t,v)+X_H(t,v)\\
\int_0^1F(t,v)dt=0
\end{aligned}\right\}
\eeq
In \cite{Albers_Frauenfelder_Leafwise_intersections_and_RFH} it is proved that critical points of $\A^\Mp$ give rise to leaf-wise intersections. 

\begin{Prop}[\cite{Albers_Frauenfelder_Leafwise_intersections_and_RFH}]\label{prop:critical_points_give_LI}
Let $(v,\eta)$ be a critical point of $\A^\Mp$. Then $x:=v(0)\in f^{-1}(0)$ and
\beq
\phi_H^1(x)\in L_x
\eeq
thus, $x$ is a leaf-wise intersection.\\[1ex]
Moreover, the map $\Crit\A^\Mp\to\{\text{leaf-wise intersections}\}$ is injective unless there exists a periodic leaf-wise intersection (see Definition \ref{def:periodic_LI}).
\end{Prop}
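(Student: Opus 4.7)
The plan is to exploit the piecewise structure of the Moser pair $\Mp=(F,H)$ to read off the Hamiltonian equation on the two halves $[0,\tfrac12]$ and $[\tfrac12,1]$ separately, then use the closed-loop condition $v(0)=v(1)$ to produce the leaf-wise intersection relation.

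First I would analyze $v$ on $[0,\tfrac12]$. Since $F\equiv 0$ there, the critical point equation reduces to $\partial_t v=X_H(t,v)$, so $v(t)=\phi_H^t(x)$ with $x:=v(0)$. Because $H$ vanishes on $[\tfrac12,1]$, the time-one map of $H$ is reached already at $t=\tfrac12$, giving $v(\tfrac12)=\phi_H^1(x)$. Next, on $[\tfrac12,1]$ we have $H\equiv 0$ and $X_F(t,\cdot)=\rho(t)X_f$, so $v$ solves $\partial_t v=\eta\rho(t)X_f(v)$; this is a time-reparametrization of the flow of $X_f$, yielding $v(t)=\phi_f^{s(t)}(v(\tfrac12))$ with $s(t)=\eta\int_{1/2}^t\rho(\tau)d\tau$. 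Using $\int_0^1\rho=1$ and $\mathrm{supp}(\rho)\subset[\tfrac12,1]$ gives $s(1)=\eta$, so $v(1)=\phi_f^\eta(\phi_H^1(x))$. The loop condition $v(1)=x$ then rearranges to $\phi_H^1(x)=\phi_f^{-\eta}(x)\in L_x$, which is exactly the leaf-wise intersection condition.

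To see that $x\in f^{-1}(0)$, I would use the second critical point equation together with the fact that $f$ is constant along its own Hamiltonian flow. On $[\tfrac12,1]$, $v(t)$ lies on a single orbit of $X_f$, so $f(v(t))=f(v(\tfrac12))$ is constant there. Hence
\[
0=\int_0^1 F(t,v)\,dt=f\big(v(\tfrac12)\big)\int_{1/2}^1\rho(t)\,dt=f\big(v(\tfrac12)\big),
\]
and since $v(\tfrac12)$ is in the same $\phi_f$-orbit as $x$, we get $f(x)=0$.

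For the injectivity statement, I would take two critical points $(v_1,\eta_1)$, $(v_2,\eta_2)$ with $v_1(0)=v_2(0)=x$. On $[0,\tfrac12]$ both solve the same ODE $\partial_t v=X_H(t,v)$ with the same initial condition, so they coincide there; in particular $v_1(\tfrac12)=v_2(\tfrac12)=:y$. On $[\tfrac12,1]$ both are reparametrizations of the $X_f$-orbit through $y$, and the loop condition forces $\phi_f^{\eta_i}(y)=x$. If $\eta_1\neq\eta_2$, then $\phi_f^{\eta_1-\eta_2}(y)=y$, so $y$ is periodic under $\phi_f$ and hence $L_x=L_y$ is a closed orbit, meaning $x$ is a periodic leaf-wise intersection in the sense of Definition \ref{def:periodic_LI}; otherwise $\eta_1=\eta_2$ and the two critical points agree. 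I do not anticipate a serious technical obstacle: the main subtlety is simply keeping track of the time-supports of $F$ and $H$ and recognizing that the only obstruction to injectivity is exactly the periodic-leaf phenomenon singled out in the statement.
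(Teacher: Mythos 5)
Your reconstruction is correct and follows the standard argument: split $S^1$ into $[0,\tfrac12]$ where $F\equiv0$ so the critical-point equation is pure $\phi_H$-flow, and $[\tfrac12,1]$ where $H\equiv0$ so it is a $\rho$-reparametrized $\phi_f$-flow, then close the loop to get $\phi_H^1(x)=\phi_f^{-\eta}(x)$, use $\int_0^1 F(t,v)\,dt=0$ plus constancy of $f$ along $\phi_f$-orbits to place $x$ on $\Sigma=f^{-1}(0)$, and derive injectivity modulo periodic leaves from ODE uniqueness on the first half and $\phi_f^{\eta_1-\eta_2}(y)=y$ on the second. The paper does not reprove this proposition (it cites Albers--Frauenfelder), and your argument is essentially the proof given there.
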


\begin{Def}
A Moser pair $\Mp=(F,H)$ is of restricted contact type if the following four conditions hold.
\begin{enumerate}
\item $0$ is a regular value of $f$.
\item $df$ has compact support.
\item The hypersurface $f^{-1}(0)$ is a closed restricted contact type hypersurface of $(M,\lambda)$.
\item The Hamiltonian vector field $X_f$ restricts to the Reeb vector field on $f^{-1}(0)$.
\end{enumerate}
\end{Def}

\begin{Rmk}
If $\Sigma\subset (M,d\lambda)$ is a closed, bounding, restricted contact type hypersurface then it is easy to construct a restricted contact type Moser pair $\Mp_0=(F,0)$ with $\Sigma=f^{-1}(0)$. We fix such a Moser pair for the rest of the paper. Critical points $(v,\eta)$ of the unperturbed Rabinowitz action functional $\A^{(F,0)}$ are $\eta$-periodic Reeb orbits in $\Sigma$ or if $\eta=0$ constant loops in $\Sigma$. Their critical values are $\A^{(F,0)}(v,\eta)=-\eta$.
\end{Rmk}

In \cite{Cieliebak_Frauenfelder_Restrictions_to_displaceable_exact_contact_embeddings} Cieliebak and Frauenfelder construct a Floer homology for the unperturbed Rabinowitz action functional. This has been extended to the perturbed case by the first author and Frauenfelder in \cite{Albers_Frauenfelder_Leafwise_intersections_and_RFH}. The corresponding Floer equation for maps $u:\R\times S^1\to M$ and $\eta:\R\to\R$ is
\beq\label{eqn:gradient_flow_equation}\left.
\begin{aligned}
&\partial_su+J(s,t,u)\big(\partial_tu-\eta X_F(t,u)-X_{H_s}(t,u)\big)=0\\[1ex]
&\partial_s\eta-\int_0^1F(t,u)dt=0.
\end{aligned}
\;\;\right\}
\eeq
Here $J$ is a smooth $(s,t)$-dependent family of compatible almost complex structures and $H_s:S^1\times M\pf\R$ is a smooth $s$-dependent family of functions. Counting solutions of the $s$-independent equation modulo $\R$-action defines the boundary operator in Rabinowitz Floer homology. In this paper we do not need the full machinery of Rabinowitz Floer homology.

We set $w=(u,\eta)$ for solutions of \eqref{eqn:critical_points_eqn}. We will think of $w$ also as a map $w:\R\pf\L_M\times\R$.

\begin{Def}
The energy of a map $w=(u,\eta)$ is defined as
\beq
E(w):=\int_{-\infty}^\infty\int_0^1||\partial_s u||^2dtds+\int_{-\infty}^\infty|\partial_s \eta|^2ds\;.
\eeq
\end{Def}

The following has been established in \cite{Albers_Frauenfelder_Leafwise_intersections_and_RFH}.

\begin{Lemma}[\cite{Albers_Frauenfelder_Leafwise_intersections_and_RFH}, Lemma 2.7]\label{lemma:energy_estimate_for_gradient_lines}
Let $w$ solve \eqref{eqn:gradient_flow_equation} with limits 
\beq
w(\pm\infty):=\lim_{s\to\pm\infty}w(s)\in\Crit\A^{(F,H_\pm)}
\eeq
then we have 
\beq\label{eqn:energy estimate for gradient lines}
E(w)\leq\A^{(F,H_-)}(w(-\infty))-\A^{(F,H_+)}(w(+\infty))-\int_{-\infty}^\infty\int_0^1\min_M\partial_sH_s(t,\cdot)dtds
\eeq 
and equality holds if $\partial_sH_s=0$. 
\end{Lemma}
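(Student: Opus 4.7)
The plan is to interpret the Floer equation \eqref{eqn:gradient_flow_equation} as the negative $L^2$-gradient flow of the $s$-dependent functional $\A^{(F,H_s)}$, and then differentiate $s\mapsto\A^{(F,H_s)}(w(s))$ along a solution. First I would compute the first variation
\begin{equation}
d\A^{(F,H)}(v,\eta)(\hat v,\hat\eta)=\int_0^1 \om(\partial_t v-\eta X_F-X_H,\hat v)\,dt-\hat\eta\int_0^1 F(t,v)\,dt,
\end{equation}
and, equipping $\L_M\times\R$ with the metric on $\L_M$ induced by $\om(\cdot,J\cdot)$ paired with the standard metric on $\R$, observe that the two lines of \eqref{eqn:gradient_flow_equation} are exactly the component-wise statement $\partial_s w(s)=-\nabla\A^{(F,H_s)}(w(s))$.

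The central computation is then the standard chain rule for an $s$-dependent functional: for $w=(u,\eta)$ solving \eqref{eqn:gradient_flow_equation},
\begin{equation}
\frac{d}{ds}\A^{(F,H_s)}(w(s))=d\A^{(F,H_s)}(w(s))\bigl(\partial_s w(s)\bigr)-\int_0^1 \partial_s H_s(t,u(s,t))\,dt.
\end{equation}
By the gradient flow identification, the first term evaluates to $-\|\partial_s w(s)\|^2=-\int_0^1\|\partial_s u\|^2\,dt-|\partial_s\eta|^2$, which is precisely (minus) the integrand defining $E(w)$.

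Integrating over $s\in\R$, the convergence $w(s)\to w(\pm\infty)\in\Crit\A^{(F,H_\pm)}$ gives
\begin{equation}
\A^{(F,H_+)}(w(+\infty))-\A^{(F,H_-)}(w(-\infty))=-E(w)-\int_{-\infty}^\infty\int_0^1\partial_s H_s(t,u(s,t))\,dt\,ds.
\end{equation}
Rearranging and applying the pointwise bound $\partial_s H_s(t,u(s,t))\geq\min_M\partial_s H_s(t,\cdot)$ yields the claimed inequality \eqref{eqn:energy estimate for gradient lines}. The equality statement is immediate: if $\partial_s H_s\equiv 0$, both the extra integral and the minimum term vanish and the displayed identity is itself an equality.

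There is no genuine obstacle; the only delicate bookkeeping is keeping sign conventions straight in the gradient identification and checking that the $J$-induced $L^2$ metric on $\L_M\times\R$ splits $\|\partial_s w\|^2$ exactly into the two summands of the definition of $E(w)$. Both are routine but easy to mis-sign.
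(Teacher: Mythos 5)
Your proof is correct and is the standard argument for this type of energy estimate: identify \eqref{eqn:gradient_flow_equation} as a negative $L^2$-gradient flow, apply the chain rule to $s\mapsto\A^{(F,H_s)}(w(s))$ splitting off the explicit $s$-dependence $-\int_0^1\partial_s H_s\,dt$, integrate over $s\in\R$, and bound $\partial_s H_s(t,u(s,t))\geq\min_M\partial_s H_s(t,\cdot)$ pointwise. The paper cites this lemma from \cite{Albers_Frauenfelder_Leafwise_intersections_and_RFH} rather than reproving it, but the identical computation appears in its proof of Lemma \ref{lem:action_energy_bounds}, so you have reconstructed the intended argument.
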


\begin{Thm}\label{thm:action_bounds_imply_compactness}
Let $w_n=(u_n,\eta_n)$ be a sequence of solutions of \eqref{eqn:gradient_flow_equation} for which there exists $a<b$ such that
\beq
a\leq\A^{(F,H)}\big(w_n(s)\big)\leq b\qquad\forall s\in\R\;.
\eeq
Then for every reparametrisation sequence $(\sigma_n)\subset\R$ the sequence $u_n(\cdot+\sigma_n)$ has a subsequence which converges in $C^\infty_\mathrm{loc}(\R\times S^1,M)$ and similarly for $\eta_n(\cdot+\sigma_n)$.
\end{Thm}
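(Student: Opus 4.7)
The plan is to apply the standard compactness scheme for Rabinowitz-type Floer equations, as developed by Cieliebak--Frauenfelder and extended to the perturbed setting in \cite{Albers_Frauenfelder_Leafwise_intersections_and_RFH}. Concretely I would establish, in order, a uniform energy bound, a uniform $L^\infty$-bound on $u_n$, a uniform $L^\infty$-bound on $\eta_n$, and the absence of bubbling; these, together with elliptic bootstrapping, will yield the claimed $C^\infty_{\mathrm{loc}}$-convergence.

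For the energy estimate I would apply Lemma \ref{lemma:energy_estimate_for_gradient_lines} on each bounded interval $[-T,T]$ with $w_n(\pm T)$ playing the role of the asymptotic limits. The error term $\int\!\int\min_M\p_sH_s$ is uniformly bounded because $H_s$ is compactly supported in $M$, and the boundary contributions lie in a fixed interval by hypothesis; letting $T\to\infty$ then yields $E(w_n)\leq b-a+C_0$ with $C_0$ independent of $n$. For the $L^\infty$-bound on $u_n$ I would use that $(M,d\lambda)$ is convex at infinity while $F$ and $H_s$ are compactly supported: outside a large compact set the Floer equation reduces to the $J$-holomorphic equation on a symplectization end, where the standard convex-end maximum principle confines every $u_n$ to a fixed compact subset $K\subset M$.

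The third step is the technical heart of the argument and the main obstacle. Using the restricted contact-type condition, namely that $\iota^*\lambda=\alpha$ is a contact form and that $X_f|_\Sigma$ equals the Reeb vector field, I would prove a ``fundamental lemma'' of the form
\[
|\eta|\leq c_1\,|\A^\Mp(v,\eta)|+c_2\,\|\nabla\A^\Mp(v,\eta)\|+c_3
\]
on an open neighbourhood of $\{F=0\}$, by rewriting $-\int_{S^1}v^*\lambda-\eta\int_0^1 F(t,v)\,dt$ in contact coordinates so that the leading $\eta$-contribution is dominated by the action together with terms controlled by the gradient. Step one bounds $\|\nabla\A^\Mp(w_n(s))\|$ in an $L^2(\R)$-sense (hence pins down a large set of $s$ where this gradient is small), step two confines $u_n$, and the second line of \eqref{eqn:gradient_flow_equation} pointwise controls $\p_s\eta_n$ by $\|F\|_{C^0(K)}$; a mean-value argument then upgrades these ingredients to a uniform pointwise bound $\|\eta_n\|_{L^\infty(\R)}\leq C$.

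Finally, exactness of $\om=d\lambda$ excludes non-constant $J$-holomorphic spheres and $E(w_n)<\infty$ excludes energy concentration, so there is no bubbling. Combined with the uniform $C^0$-bounds from the previous steps, standard Floer-theoretic elliptic regularity provides uniform $C^k_{\mathrm{loc}}$-estimates on $(u_n(\cdot+\sigma_n),\eta_n(\cdot+\sigma_n))$ for every $k$, and Arzel\`a--Ascoli together with a diagonal extraction produces the desired $C^\infty_{\mathrm{loc}}$-convergent subsequence.
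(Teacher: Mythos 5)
Your outline reproduces, in the correct order and with the correct key lemmas, the Cieliebak--Frauenfelder/Albers--Frauenfelder compactness scheme that the paper itself relies on: Theorem~\ref{thm:action_bounds_imply_compactness} is stated without proof and attributed to \cite{Albers_Frauenfelder_Leafwise_intersections_and_RFH}, and your four-step decomposition (energy bound from the action window via Lemma~\ref{lemma:energy_estimate_for_gradient_lines}, $C^0$-bound on $u_n$ from convexity at infinity, $L^\infty$-bound on $\eta_n$ from the ``fundamental lemma'' $|\eta|\le c_1|\A^\Mp|+c_2\|\nabla\A^\Mp\|+c_3$ interpolated using the $L^2$-gradient bound and the Lipschitz estimate $|\partial_s\eta|\le\|F\|_{C^0}$, and absence of bubbling from exactness followed by elliptic bootstrapping) is exactly the argument there. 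One small imprecision worth flagging: in step one, bounding $\int\!\int\min_M\partial_sH_s$ requires not merely that $H_s$ be compactly supported in $M$ but that its $s$-dependence have bounded total variation (in the application $H_s=\beta_R(s)H$ with $\int|\beta_R'|\,ds$ fixed); this is harmless here but your stated justification does not quite capture it.
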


\section{Moduli spaces}\label{sec:moduli_space}

Let $k\geq1$ be a natural number. We choose a smooth family of functions $\beta_R\in C^\infty(\R,[0,1])$ satisfying
\begin{enumerate}
\item for $R\geq1$: $\beta_R'(s)\cdot s\leq0$ for all $s\in\R$, $\beta_R(s)=1$ for $s\in[0,(k+1)R]$, and $\beta_R(s)=0$ for $s\leq-1$ and $s\geq (k+1)R+1$, 
\item for $R\leq1$: $\beta_R(s)\leq R$ for all $s\in\R$ and $\mathrm{supp}\beta_R\subset[-1,k+2]$,
\item $\lim_{R\to\infty}\beta_R(s)=:\beta_\infty^+(s)$ and $\lim_{R\to\infty}\beta_R(s+(k+1)R)=:\beta_\infty^-(s)$ exists, where the limit is taken with respect to the $C^\infty_{loc}$ topology.
\end{enumerate} 

\begin{figure}[htb]
\psfrag{a}{$(k+1)R$}
\psfrag{b}{$(k+1)R+1$}
\psfrag{-1}{$-1$}
\psfrag{beta}{$\beta_R$}
\includegraphics[scale=1.1]{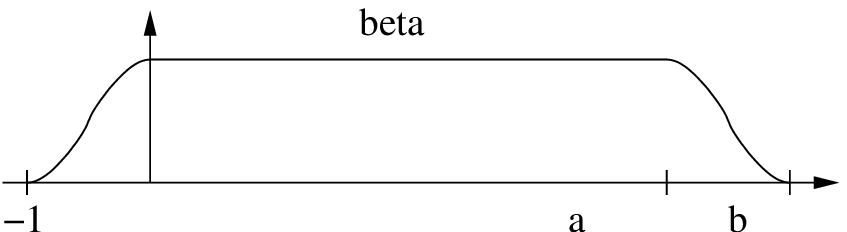}
\caption{The function $\beta_R$.}
\end{figure}

We fix a Hamiltonian function $H:[0,1]\times M\to\R$ with $H(t,\cdot)=0$ for all $t\in[\tfrac12,1]$ and set $K_R(s,t,x):=\beta_R(s)H(t,x)$ and
\beq
\Mp_R:=(F, K_R(s,t,x))\;.
\eeq
For every $R\geq0$ we define
\beq\label{eqn:moduli_space_M(R)}
\M(R):=\left\{w=(u,\eta)\in C^\infty(\R,\L_M\times\R)\bigg|\; 
\begin{aligned}
 &w\text{ solves }\eqref{eqn:gradient_flow_equation}\text{ with }\Mp_R\\
 &\lim_{s\to\pm\infty} u(s)\in\Sigma
\end{aligned}
\right\}\;.
\eeq
and 
\beq
\M[0,R]:=\left\{(r,w)\mid0\leq r\leq R\text{ and }w\in\M(r)\right\}\;.
\eeq
see figure \ref{fig:moduli_space_M(R)}.

\begin{Lemma}\label{lem:action_energy_bounds}
For $w\in\M(R)$  we have
\beq
E(w)\leq||H||\;.
\eeq
Moreover, for $R=0$ we have $E(w)=0$. Furthermore, 
\beq
-2\int_0^1 \max_M|H(t,\cdot)|dt\leq\A^{\Mp_R(s)}(w(s))\leq 2\int_0^1 \max_M|H(t,\cdot)|dt
\eeq
holds.
\end{Lemma}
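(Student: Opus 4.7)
The plan is to express the energy and the value of the action along a Floer trajectory as explicit $s$-integrals and then bound them using the prescribed profile of $\beta_R$. First I observe that $K_R$ is compactly supported in $s$ (in $[-1,(k+1)R+1]$ for $R\geq 1$ and inside $[-1,k+2]$ for $R \leq 1$), so $w(\pm\infty)$ are critical points of the unperturbed functional $\A^{(F,0)}$; the requirement $\lim_{s\to\pm\infty} u(s) \in \Sigma$ in \eqref{eqn:moduli_space_M(R)} forces these limits to be constant loops in $\Sigma$ paired with $\eta = 0$, so that $\A^{(F,0)}(w(\pm\infty)) = 0$.

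For the energy bound I would apply Lemma \ref{lemma:energy_estimate_for_gradient_lines} with $H_\pm \equiv 0$. The two action terms drop out and only the $\partial_s K_R = \beta_R'(s) H$ contribution survives, yielding
$$E(w) \;\leq\; -\int_{-\infty}^\infty \int_0^1 \min_M \bigl(\beta_R'(s) H(t,\cdot)\bigr) \, dt \, ds.$$
Splitting the $s$-integral over the sign of $\beta_R'(s)$ replaces the pointwise minimum by $\min_M H(t,\cdot)$ on $\{\beta_R' \geq 0\}$ and $\max_M H(t,\cdot)$ on $\{\beta_R' < 0\}$. Combined with the identities $\int_{\{\beta_R' \geq 0\}} \beta_R' \, ds = \sup \beta_R$, $\int_{\{\beta_R' < 0\}} \beta_R' \, ds = -\sup \beta_R$, and $\sup \beta_R \leq 1$ in both regimes, the right-hand side collapses to at most $\int_0^1 (\max_M H - \min_M H) \, dt = \|H\|$. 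When $R = 0$ the constraint $\beta_R \leq R$ together with $\beta_R \geq 0$ forces $\beta_0 \equiv 0$, so $K_0 = 0$ and Lemma \ref{lemma:energy_estimate_for_gradient_lines} then holds with equality, giving $E(w) = 0$.

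For the pointwise action bound I would differentiate $\A^{\Mp_R(s)}(w(s))$ in $s$. Along the Floer trajectory this gives
$$\tfrac{d}{ds}\A^{\Mp_R(s)}(w(s)) \;=\; -\|\partial_s w(s)\|^2 \;-\; \beta_R'(s)\int_0^1 H(t, u(s,t)) \, dt.$$
Integrating from $-\infty$ to $s$, using $\A^{\Mp_R(-\infty)}(w(-\infty)) = 0$, and dropping the nonpositive kinetic term yields
$$\A^{\Mp_R(s)}(w(s)) \;\leq\; \int_{-\infty}^s |\beta_R'(\sigma)| \, d\sigma \cdot \int_0^1 \max_M |H(t,\cdot)| \, dt,$$
which is at most $2\int_0^1 \max_M |H(t,\cdot)| \, dt$ because the total variation of $\beta_R$ is bounded by $2$ in both regimes. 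Integrating from $s$ to $+\infty$ instead flips the sign of the kinetic contribution (making it nonnegative) and produces the matching lower bound.

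The argument is essentially careful bookkeeping; there is no deep obstacle. The only item worth checking explicitly is that $\sup \beta_R \leq 1$ and $\int_\R |\beta_R'| \, ds \leq 2$ hold uniformly across the two regimes $R \geq 1$ and $R \leq 1$, which is immediate from conditions (1) and (2) in the definition of the cut-off family.
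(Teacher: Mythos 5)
Your argument is correct and follows essentially the same route as the paper's: identifying the asymptotic limits as constant loops in $\Sigma$ with $\eta=0$ so that their action vanishes, invoking Lemma~\ref{lemma:energy_estimate_for_gradient_lines} (with $H_\pm\equiv 0$) for the energy bound, and differentiating $\A^{\Mp_R(s)}(w(s))$ along the flow and discarding the kinetic term for the pointwise action bounds. The only cosmetic difference is that you package the final estimate using the total variation $\int_\R|\beta_R'|\le 2$, whereas the paper splits the $s$-integral at $s=0$ according to the sign of $\beta_R'$ and tracks $\min_M H$ versus $\max_M H$ separately; both rely on the same unimodality of $\beta_R$ and yield the identical bound.
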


\begin{proof}
We compute using $\p_sw+\nabla\A^{\Mp_R(s)}=0$
\bean
\underbrace{\A^{\Mp_0}(w(-\infty))}_{=0}-\A^{\Mp_R(s)}(w(s))&=-\int_{-\infty}^s\frac{d}{ds}\A^{\Mp_R(s)}(w(s))ds\\
&=-\int_{-\infty}^s d\A^{\Mp_R(s)}(w(s))\cdot\p_sw ds-\int_{-\infty}^s \frac{\p\A^{\Mp_R(s)}}{\p s}(w(s)) ds\\
&=-\int_{-\infty}^s\langle\underbrace{\nabla\A^{\Mp_R(s)}}_{=-\p_sw},\p_sw \rangle ds+\int_{-\infty}^s\int_0^1 \frac{\p K_R}{\p s}(s,t,u(s,t))dtds\\
&=\int_{-\infty}^s||\p_s w(s)||^2 ds+\int_{-\infty}^s \int_0^1 \frac{\p K_R}{\p s}(s,t,u(s,t))dtds\\
&\geq\int_{-\infty}^s\left(\frac{\p}{\p s} \beta_R(s)\right)\int_0^1H(t,u(s,t))dtds\\
&\geq\underbrace{\int_{-\infty}^0\left(\frac{\p}{\p s} \beta_R(s)\right)ds}_{=1}\cdot\int_0^1-\max_M|H(t,\cdot)|dt\\
&\;\;\;\;+\underbrace{\int_{0}^\infty\left(\frac{\p}{\p s} \beta_R(s)\right)ds}_{=-1}\cdot\int_0^1 \max_M|H(t,\cdot)|dt\\
&\geq-2\int_0^1 \max_M|H(t,\cdot)|dt
\eea
\begin{figure}[htb]
\psfrag{Sigma}{$\Sigma$}
\psfrag{R}{$(k+1)R$}
\psfrag{H}{$H$}
\includegraphics[scale=1]{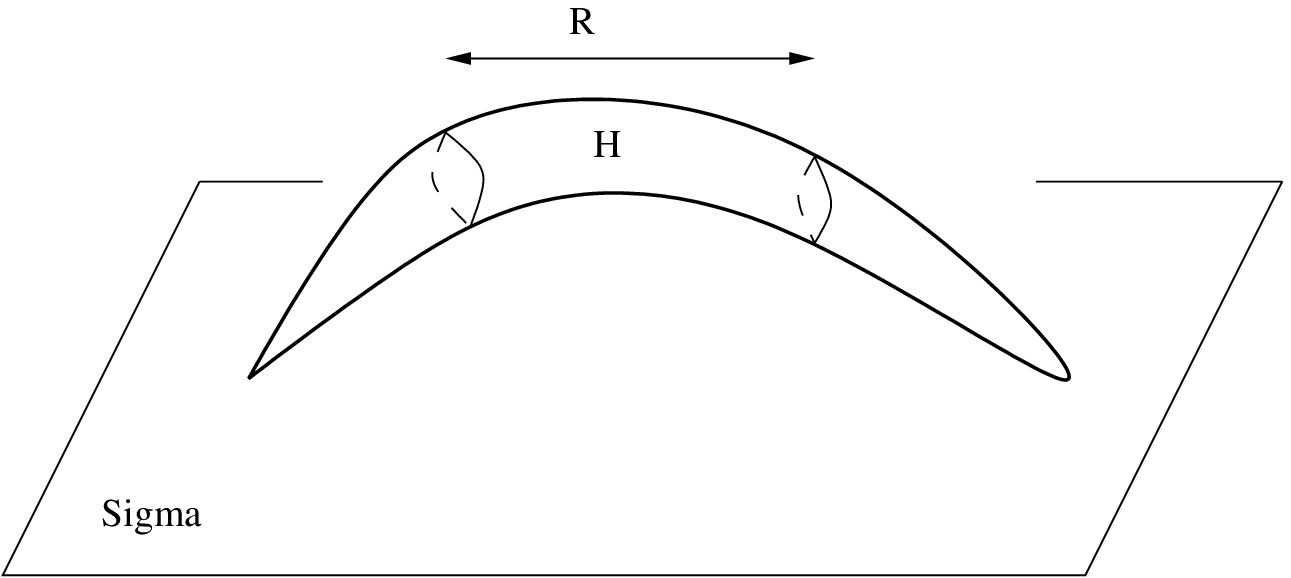}
\caption{An element of the moduli space $\M(R)$.}\label{fig:moduli_space_M(R)}
\end{figure}
The other inequality is proved by replacing $\int_{-\infty}^s$ by $\int_s^{\infty}$. The first two assertions follow from Lemma \ref{lemma:energy_estimate_for_gradient_lines} together with the observation that $\A^{(F,\beta_RH)}(w(\pm\infty))=0$ and 
\bea
-\int_{-\infty}^\infty\int_0^1\min_M\partial_sK_R(s,t,\cdot)dtds&=-\int_{-\infty}^\infty\int_0^1\min_M\{\p_s\beta_R(s)H(t,\cdot)\}dtds\\
&=-\int_{-\infty}^0\int_0^1\p_s\beta_R(s)\min_MH(t,\cdot)dtds\\
&\quad-\int_0^{+\infty}\int_0^1\p_s\beta_R(s)\max_MH(t,\cdot)dtds\\
&=\beta_R(0)||H||\leq||H||\;.
\eea
\end{proof}

\begin{Prop}\label{prop:compactness_moduli_spaces}
For any $R\geq0$ the moduli spaces $\M(R)$ and $\M[0,R]$ are compact. Moreover, $\M(0)\cong\Sigma$.
\end{Prop}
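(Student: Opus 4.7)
The plan is to split the proposition into three parts: identify $\M(0)$ with $\Sigma$ using that the perturbation $K_R$ vanishes at $R=0$; prove compactness of $\M(R)$ for fixed $R>0$ by combining the action bound of Lemma \ref{lem:action_energy_bounds} with the compactness input from Theorem \ref{thm:action_bounds_imply_compactness}; and then upgrade this to compactness of $\M[0,R]$ by observing that the relevant bounds are uniform in $r\in[0,R]$.

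For $\M(0)\cong\Sigma$: property (2) of the cut-off family forces $0\leq\beta_0(s)\leq 0$, so $\beta_0\equiv 0$ and $K_0\equiv 0$. The Floer equation \eqref{eqn:gradient_flow_equation} reduces to the unperturbed negative gradient flow of $\A^{(F,0)}$, and Lemma \ref{lem:action_energy_bounds} specialises to $E(w)=0$, whence $\partial_s u\equiv\partial_s\eta\equiv 0$ and $w(s)\equiv(v,\eta_0)\in\Crit\A^{(F,0)}$. The classification recalled in Section 2 states that such critical points are either constant loops in $\Sigma$ (when $\eta_0=0$) or $\eta_0$-periodic Reeb orbits; the hypothesis $||\psi||<\wp(\Sigma,\alpha)$ of Theorem \ref{thm:main}, together with the uniform action bound $|\A^{(F,0)}(w)|=|\eta_0|\leq 2\int_0^1\max_M|H|\,dt$ from Lemma \ref{lem:action_energy_bounds} (for $H$ chosen to realise the Hofer norm appropriately), excludes the Reeb case and forces $\eta_0=0$. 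The assignment $w\mapsto v(0)$ then furnishes the claimed diffeomorphism with $\Sigma$.

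For compactness of $\M(R)$: given a sequence $w_n\in\M(R)$, Lemma \ref{lem:action_energy_bounds} provides the uniform bounds $|\A^{\Mp_R(s)}(w_n(s))|\leq 2\int_0^1\max_M|H|\,dt$ for all $s\in\R$, so Theorem \ref{thm:action_bounds_imply_compactness} produces a $C^\infty_{\mathrm{loc}}$ convergent subsequence $w_n\to w_\infty$. Exactness of $(M,d\lambda)$ combined with convexity at infinity yields the standard maximum principle confining the loops $u_n$ to a compact subset of $M$ and excluding sphere bubbling; the energy bound $E(w_n)\leq ||H||$ together with the uniform action pinning then forces the asymptotics at $s\to\pm\infty$ to persist and to stay in $\Sigma$, so $w_\infty\in\M(R)$.

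For compactness of $\M[0,R]$: given $(r_n,w_n)\in\M[0,R]$, pass to a subsequence with $r_n\to r_\infty\in[0,R]$. Smoothness of the family $R\mapsto\beta_R$ gives $\beta_{r_n}\to\beta_{r_\infty}$ and hence $K_{r_n}\to K_{r_\infty}$ in $C^\infty_{\mathrm{loc}}$, while the action and energy bounds of Lemma \ref{lem:action_energy_bounds} depend only on $H$ and are independent of $r$. Theorem \ref{thm:action_bounds_imply_compactness}, applied to the sequence with varying perturbation, produces a subsequential limit $w_n\to w_\infty\in\M(r_\infty)$. The step I expect to be most delicate is the boundary case $r_\infty=0$: here $K_{r_n}\to 0$ and $E(w_n)\to 0$, so $w_\infty$ must be stationary and land in $\M(0)$, and one has to check that potential Reeb-orbit limits are excluded uniformly along the sequence; this is precisely what the combination of the action bound of Lemma \ref{lem:action_energy_bounds} with the hypothesis $||\psi||<\wp(\Sigma,\alpha)$ provides, as used in the proof of $\M(0)\cong\Sigma$.
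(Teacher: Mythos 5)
Your overall structure matches the paper's: use Lemma \ref{lem:action_energy_bounds} for uniform action and energy bounds, feed these into Theorem \ref{thm:action_bounds_imply_compactness} for $C^\infty_{\mathrm{loc}}$-precompactness, and treat $R=0$ by noting $E(w)=0$ forces constancy. But two steps are off.

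First, your argument that $\M(0)\cong\Sigma$ requires the hypothesis $||\psi||<\wp(\Sigma,\alpha)$ is both unnecessary and not correct as stated. Once $E(w)=0$ gives $\partial_s w\equiv0$, the constant $w$ is forced to lie in the component $\Sigma\times\{0\}\subset\Crit\A^{(F,0)}$ directly by the boundary condition $\lim_{s\to\pm\infty}u(s)\in\Sigma$ built into the definition \eqref{eqn:moduli_space_M(R)} of $\M(R)$; no further argument is needed. Moreover, the bound you invoke, $|\eta_0|\leq 2\int_0^1\max_M|H(t,\cdot)|\,dt$, does not in general imply $|\eta_0|<\wp(\Sigma,\alpha)$: the hypothesis is $||H||=\int(\max_M H-\min_M H)\,dt<\wp$, and $2\int\max_M|H|\,dt$ can exceed $||H||$ (and hence $\wp$). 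So as written that exclusion fails.

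Second, and more seriously, your proof of compactness of $\M(R)$ asserts that ``the asymptotics persist and stay in $\Sigma$'' but does not give the mechanism, which is exactly where $||\psi||<\wp(\Sigma,\alpha)$ enters in the paper. The point is that $C^\infty_{\mathrm{loc}}$-precompactness can fail to yield a limit in $\M(R)$ only through breaking: a non-constant gradient flow line $v$ of the unperturbed functional $\A^{(F,0)}$ splits off, with one asymptotic end on $\Sigma\times\{0\}$ and the other a (contractible) closed Reeb orbit of some period $\eta$. For such a $v$ one computes $E(v)=|\eta|\geq\wp(\Sigma,\alpha)$, while $E(v)\leq\limsup_n E(w_n)\leq||H||<\wp(\Sigma,\alpha)$, a contradiction. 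You do record essentially this reasoning in your discussion of the $r_\infty=0$ boundary case of $\M[0,R]$, so you have the idea; but it must be applied to $\M(R)$ for every $R$, since it is the sole obstruction to compactness there. With these two corrections --- drop the Hofer-norm argument at $R=0$ in favour of the boundary condition, and insert the energy-comparison argument excluding broken Reeb orbits for general $R$ --- your proposal becomes the paper's proof.
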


\begin{proof}
This follows easily as in the proof of Theorem A in \cite{Albers_Frauenfelder_Leafwise_intersections_and_RFH} as follows. Using Lemma \ref{lem:action_energy_bounds} we can apply Theorem \ref{thm:action_bounds_imply_compactness} to extract $C^\infty_{loc}$-convergent subsequences of any sequence $w_n(s-\sigma_n)$ where $w_n\in\M(R)$ and $(\sigma_n)\subset\R$. Then it is proved in \cite{Albers_Frauenfelder_Leafwise_intersections_and_RFH} that if the sequence does not converge to an element in $\M(R)$ there has to exist a non-constant gradient flow line $v$ of $\A^{(F,0)}$ with one asymptotic end on $\Sigma$. Therefore, there exists a Reeb orbit $x$ of period $\eta$ on $(\Sigma,\alpha)$ which is contractible in $M$. We conclude
\beq
E(v)\leq\limsup E(w_n)\leq||H||<\wp(\Sigma,\alpha)\;.
\eeq
On the other hand we can compute $E(v)$ by Lemma \ref{lemma:energy_estimate_for_gradient_lines}:
\beq
E(v)=|\eta|\geq\wp(\Sigma,\alpha)
\eeq
where the inequality follows from the definition of $\wp(\Sigma,\alpha)$. This contradiction shows that $\M(R)$ is compact. That $\M[0,R]$ is compact follows in the same way.

If $R=0$ then according to Lemma \ref{lem:action_energy_bounds} $E(w)=0$ for all $w\in\M(0)$. So $\p_s w(s)=0$ and $w(s)=(p,0)\in\Crit\A^{(F,0)}$ with $p\in\Sigma$ being the constant loop; thus, $\M(0)\cong\Sigma.$
\end{proof}
\begin{Rmk}
The moduli space $\M(R)$ is the zero-set of a Fredholm section $\F(R)$ of a Banach space bundle. The Fredholm index of $\F(R)$ equals $\ind\F(R)=\dim\Sigma$. The moduli space $\M(0)$ contains only constant gradient flow lines with Lagrange multiplier $\eta=0$, i.e.~$\M(0)\cong\Sigma$. As shown in \cite{Albers_Frauenfelder_Leafwise_intersections_and_RFH} the Morse-Bott property of the defining function $F$ implies that the Fredholm section is transverse to the zero-section for $R=0$. In particular, $\M(0)$ is a smooth manifold. Since $\M(R)$ is compact it is a smooth manifold for $R$ sufficiently small since transversality is an open property.
\end{Rmk}

\section{Cohomology operations}\label{sec:cohomology_operations}

We fix a natural number $k\geq1$. The moduli space $\M(R)$ carries an evaluation map
\bea
\ev_R:\M(R)&\pf M^k=M\times\ldots\times M\\
w=(u,\eta)&\mapsto \big(u(R,0),u(2R,0),\ldots,u(kR,0)\big)\;.
\eea
For $R=0$ the evaluation map is under the identification $\M(0)\cong\Sigma$ the diagonal embedding $\Sigma\hpf\Delta_\Sigma\subset M^k$. Similarly, $\M[0,R]$ has an evaluation map $\mathrm{EV}(r,w):=\ev_r(w)$ to $M^k$.

Next, we define a Morse theoretic realization of the cohomology operation 
\bea\label{eqn:cohomology_op_Theta}
\Theta:\H^*(M)\otimes\ldots\otimes\H^*(M)\otimes\H_*(\Sigma)&\pf\H_*(\Sigma)\\
a_1\otimes \ldots\otimes a_k\otimes b&\mapsto \iota^*(a_1\cup\ldots\cup a_k)\cap b
\eea
where $\iota:\Sigma\hpf M$ is the inclusion map. 

\begin{Rmk}
Since the symplectic manifold $(M,\om)$ is exact it either has boundary or is non-compact. In the following we will choose Morse functions on $M$ in order to model singular (co-)homology by Morse (co-)homology. For this we need to restrict to a certain class of Morse functions. We are always considering the negative gradient flow. If $M$ has boundary then we assume that the (positive) gradient of the Morse function points outward along the boundary of $M$. If $M$ is non-compact we assume that the Morse function is proper and bounded from below. Under these assumptions standard Morse (co-)homology can be defined and is isomorphic to singular (co-)homology. From now on we assume whenever we choose Morse functions on $M$ they are in the just prescribed class.
\end{Rmk}

For that we choose Morse functions $f_1,\ldots,f_k:M\to\R$ and $f_*:\Sigma\to\R$ and Riemannian metrics $g_1,\ldots,g_k,g_*$. We set for critical points $x_j\in\Crit(f_j)$ and $x_*^\pm\in\Crit(f_*)$
\beqn
\M(0,x_1,\ldots,x_k,x_*^-,x_*^+):=\left\{w=(u,\eta)\in\M(0)\bigg|
\begin{aligned} 
&u(-\infty)\in W^u(x_*^-,f_*),\,u(+\infty)\in W^s(x_*^+,f_*)\\
&\ev_0(u)\in W^s(x_1,f_1)\times\ldots\times W^s(x_k,f_k)
\end{aligned}
\right\}
\eeq
see figure \ref{fig:moduli_space_M(0,x,x)}. In particular, $\M(0,x_1,\ldots,x_k,x_*^\pm)=\emptyset$ unless $\bigcap W^s(x_i,f_i)\cap\Sigma\neq\emptyset$. We denote by $\CM^*(f)$ the Morse chain complex associated to a Morse-Smale pair $(f,g)$.

\begin{figure}[htb]
\psfrag{Sigma}{$\Sigma$}
\psfrag{R}{$R$}
\psfrag{x1}{$x_1$}
\psfrag{xn}{$x_k$}
\psfrag{x+}{$x_*^+$}
\psfrag{x-}{$x_*^-$}
\psfrag{d}{$\cdots$}
\includegraphics[scale=1]{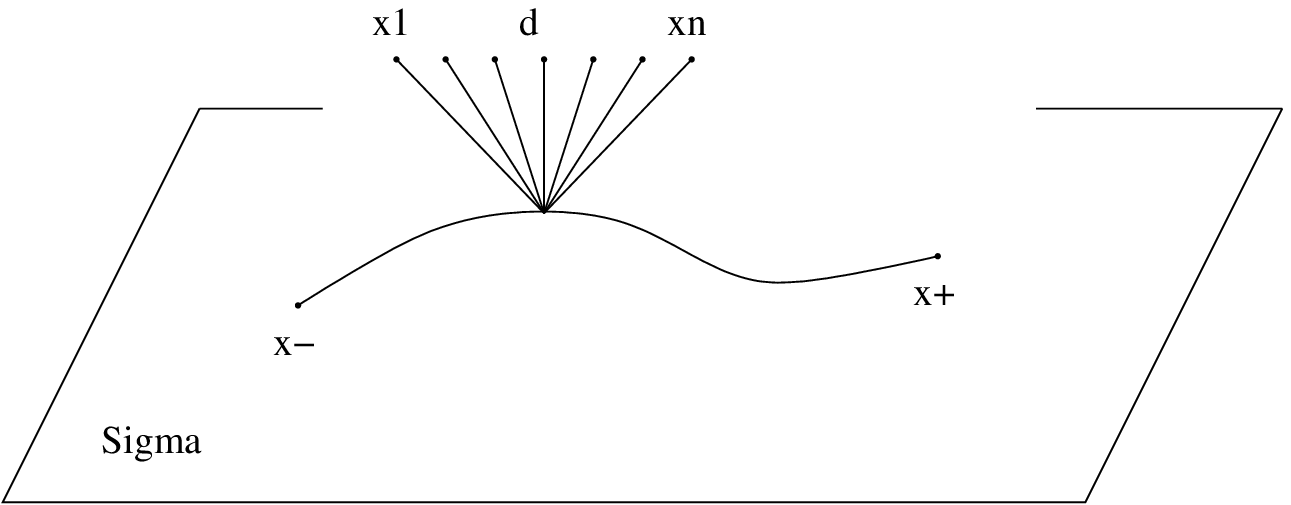}
\caption{An element of the moduli space $\M(0,x_1,\ldots,x_k,x_*^-,x_*^+)$.}\label{fig:moduli_space_M(0,x,x)}
\end{figure}

\begin{Prop}\label{prop:cohomology_op_1}
For generic Morse functions $f_i$ and Riemannian metrics $g_i$ the moduli space $\M(0,x_1,\ldots,x_k,x_*^-,x_*^+)$ is a smooth manifold.  Moreover, the map defined by
\bea
\theta_0:\CM^*(f_1)\otimes\ldots\otimes\CM^*(f_k)\otimes\CM_*(f_*)&\pf\CM_*(f_*)\\
x_1\otimes \ldots\otimes x_k\otimes x_*^-&\mapsto \sum_{x_*^+}\#_2\M(0,x_1,\ldots,x_k,x_*^-,x_*^+)\cdot x_*^+
\eea
defines a chain map which on homology agrees with the cohomology operation $\Theta$.
\end{Prop}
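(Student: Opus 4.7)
The statement lives entirely on the $R=0$ locus, where by Proposition \ref{prop:compactness_moduli_spaces} we have $\M(0)\cong\Sigma$ via constant loops, and under this identification the evaluation $\ev_0$ becomes the diagonal embedding $\Sigma\hookrightarrow M^k$. The moduli space in question is therefore canonically the subset of $\Sigma$ given by
\[
W^u(x_*^-,f_*)\cap W^s(x_*^+,f_*)\cap\bigcap_{i=1}^k\iota^{-1}\bigl(W^s(x_i,f_i)\bigr).
\]
With this reformulation the proposition reduces to a classical Morse-theoretic statement, which I will prove in three steps.

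\emph{Transversality and dimension.} By a successive Sard--Smale argument one can choose the Morse data $(f_i,g_i)$ on $M$ and $(f_*,g_*)$ on $\Sigma$ so that $\iota:\Sigma\hookrightarrow M$ is transverse to every $W^s(x_i,f_i)$, so that the pullbacks $\iota^{-1}(W^s(x_i,f_i))\subset\Sigma$ meet mutually transversally for every $k$-tuple of critical points, and so that their joint intersection meets $W^u(x_*^-,f_*)\cap W^s(x_*^+,f_*)$ transversally in $\Sigma$. A direct dimension count then yields $\dim\M(0,x_1,\ldots,x_k,x_*^-,x_*^+)=\ind(x_*^-)-\ind(x_*^+)-\sum_{i=1}^k\ind(x_i)$, which is precisely the expected degree of $\theta_0$.

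\emph{Chain-map property.} When this dimension equals zero, Morse-theoretic compactness produces a finite count. When it equals one, the natural Morse compactification is a compact $1$-manifold whose boundary consists of configurations in which exactly one of the $k+2$ Morse trajectories breaks at an intermediate critical point. Breakings of the outgoing $f_*$-trajectory contribute the terms of $\partial\circ\theta_0$, breakings of the incoming $f_*$-trajectory contribute $\theta_0\circ(1^{\otimes k}\otimes\partial)$, and breakings of the $i$-th trajectory of $f_i$ contribute $\theta_0\circ(1^{\otimes i-1}\otimes\delta\otimes 1^{\otimes k-i}\otimes 1)$. Since the $\Z/2$-count of the boundary points of a compact $1$-manifold vanishes, summing these contributions yields the chain-map identity.

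\emph{Comparison with $\Theta$.} Each submanifold $W^s(x_i,f_i)\subset M$ represents the \Poincare dual of the Morse cohomology class $[x_i]$, so by transversality $\iota^{-1}(W^s(x_i,f_i))$ represents $\PD(\iota^*[x_i])$ in $\Sigma$, and the transverse intersection of these $k$ cycles in $\Sigma$ represents $\PD\bigl(\iota^*(a_1\cup\cdots\cup a_k)\bigr)$. Intersecting further with the cycle carried by $W^u(x_*^-,f_*)\cap W^s(x_*^+,f_*)$ implements the chain-level cap product on $\Sigma$, so $\theta_0$ induces $\Theta$ on (co-)homology, along the same lines as the fixed-point construction of Schwarz in \cite{Schwarz_A_quantum_cup_length_estimate_for_symplectic_fixed_points}. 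The main obstacle I anticipate is the simultaneous transversality step, since the Morse data on $M$ and on $\Sigma$ interact through $\iota$ and must be chosen compatibly for \emph{all} tuples of critical points and for every one-dimensional configuration that appears in the boundary analysis; this is handled inductively, perturbing one function at a time and exploiting the openness of the transversality conditions already achieved.
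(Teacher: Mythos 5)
Your proposal is correct and takes essentially the same approach as the paper: identify $\M(0)\cong\Sigma$ with $\ev_0$ the diagonal embedding, reduce the moduli space to a transverse intersection of stable and unstable manifolds inside $\Sigma$, and invoke standard Morse-theoretic transversality, gluing, and \Poincare duality for the chain-map property and the identification with the cap product. The paper's own proof is terser---it simply appeals to ``standard Morse theory'' and cites Schwarz's book for the identification $\theta_0=\Theta$---but the dimension count, boundary breaking analysis, and duality argument you spell out are precisely what that reference supplies.
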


\begin{Rmk}
$\#_2\M$ denotes the parity of the set $\M$ if it is finite and zero otherwise.
\end{Rmk}

\begin{proof}
It follows from standard Morse theory that for generic Morse functions $f_i$ and Riemannian metrics $g_i$ we have
\beq
W^s(x_1,f_1)\times\ldots\times W^s(x_k,f_k)\pitchfork \Delta_\Sigma\subset M^k
\eeq 
where $\Delta_\Sigma\subset M^k$ is the diagonal embedding of $\Sigma$. Since for $R=0$ the evaluation map $\ev_0:\M(0)\to M^k$ is this diagonal embedding we conclude
\beq
W^s(x_1,f_1)\times\ldots\times W^s(x_k,f_k)\pitchfork \ev_0\;.
\eeq
Finally, choosing $f_*$ and $g_*$ generic we see that the moduli space $\M(0,x_1,\ldots,x_k,x_*^-,x_*^+)$ is smooth. That $\theta_0=\Theta$ after identifying Morse homology with singular homology is again standard Morse theory, see \cite{Schwarz_Morse_homology}.
\end{proof}

Since the moduli space $\M(R)$ is the zero-set of a Fredholm section $\F(R)$ and since $\M(R)$ is compact we can choose an arbitrarily small abstract perturbation of the Fredholm section $\F(R)$ such that the zero-set $\Mt(R)$ of the perturbed Fredholm section $\widetilde{\F}(R)$ is a smooth compact finite-dimensional manifold. Since $\M(R)$ is already transverse for sufficiently small $R$ we can arrange that $\Mt(R)=\M(R)$ for small $R$. We point out that $\M[0,R]$ has a natural projection to $[0,R]$. The same abstract perturbation procedure gives rise to smooth perturbed moduli spaces $\Mt[0,R]$ where the perturbation can be chosen with fixed ends, i.e., the fibers over $0$ resp.~$R$ are  $\M(0)$ resp.~ $\Mt(R)$. The perturbed moduli space $\Mt(R)$ resp.~$\Mt[0,R]$ still carries an evaluation map $\evt(R):\Mt(R)\to M^k$ resp.~$\EVt:\Mt[0,R]\to M^k$. 

For Morse functions $f_1,\ldots,f_k:M\to\R$ and $f_*:\Sigma\to\R$, Riemannian metrics $g_1,\ldots,g_k,g_*$, and  critical points $x_j\in\Crit(f_j)$ and $x_*^\pm\in\Crit(f_*)$ we set
\beqn
\Mt(R,x_1,\ldots,x_k,x_*^-,x_*^+):=\left\{w=(u,\eta)\in\Mt(R)\bigg|
\begin{aligned} 
&u(-\infty)\in W^u(x_*^-,f_*),\,u(+\infty)\in W^s(x_*^+,f_*)\\
&\evt_R(u)\in W^s(x_1,f_1)\times\ldots\times W^s(x_k,f_k)
\end{aligned}
\right\}
\eeq
see figure \ref{fig:moduli_space_M(R,xxx)}.

\begin{figure}[htb]
\psfrag{Sigma}{$\Sigma$}
\psfrag{R}{$(k+1)R$}
\psfrag{x1}{$x_1$}
\psfrag{xn}{$x_k$}
\psfrag{x+}{$x_*^+$}
\psfrag{x-}{$x_*^-$}
\psfrag{d}{$\cdots$}
\includegraphics[scale=1]{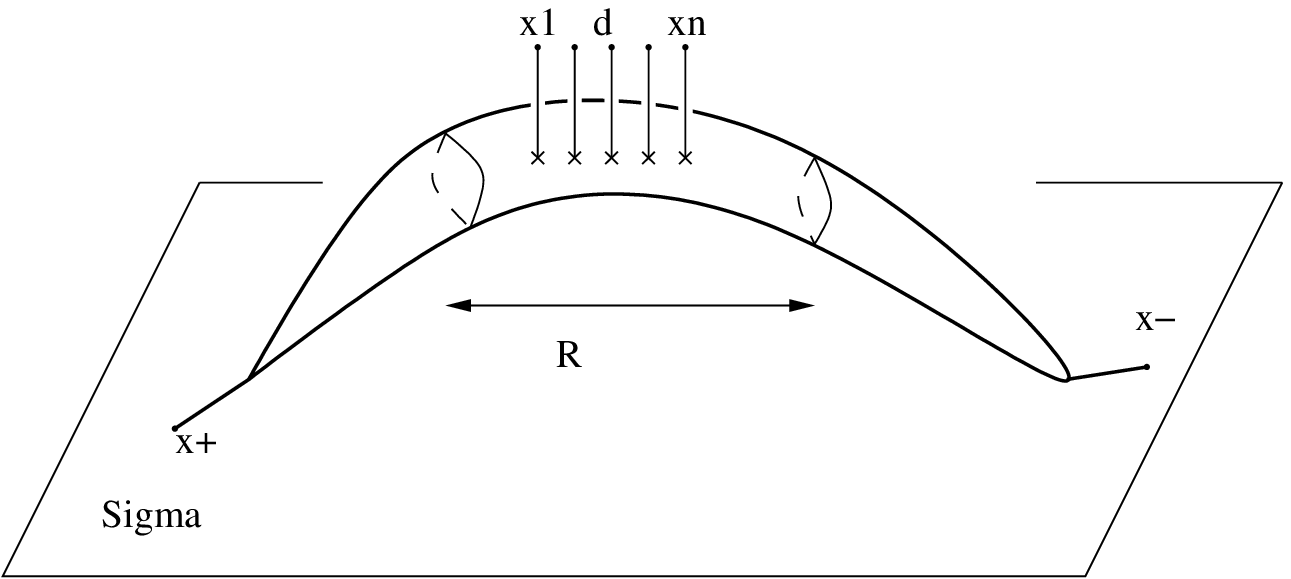}
\caption{An element of the moduli space $\Mt(R,x_1,\ldots,x_k,x_*^-,x_*^+)$.}\label{fig:moduli_space_M(R,xxx)}
\end{figure}

\begin{Prop}\label{prop:cohomology_op_2}
For generic Morse functions $f_i$ and Riemannian metrics $g_i$ the moduli space $\Mt(R,x_1,\ldots,x_k,x_*^-,x_*^+)$ is a smooth manifold.  Moreover, the map defined by
\bea
\theta_R:\CM^*(f_1)\otimes\ldots\otimes\CM^*(f_k)\otimes\CM_*(f_*)&\pf\CM_*(f_*)\\
x_1\otimes \ldots\otimes x_k\otimes x_*^-&\mapsto \sum_{x_*^+}\#_2\Mt(R,x_1,\ldots,x_k,x_*^-,x_*^+)\cdot x_*^+
\eea
defines a chain map which is chain homotopic to the cohomology operation $\theta_0$.
\end{Prop}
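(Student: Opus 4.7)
The plan is to prove the two assertions separately, both by the standard Morse/Floer cobordism arguments: transversality of the evaluation map on the zero-set of the abstract perturbation gives smoothness, while the chain-level relations come from inspecting boundaries of one-dimensional components.

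For smoothness of $\Mt(R,x_1,\ldots,x_k,x_*^-,x_*^+)$, I would first choose the abstract perturbation $\widetilde{\F}(R)$ so that $\evt_R:\Mt(R)\pf M^k$ is submersive on a sufficiently large part of $\Mt(R)$. Then for generic Morse--Smale pairs $(f_i,g_i)$ the products $W^s(x_1,f_1)\times\ldots\times W^s(x_k,f_k)$ are transverse to $\evt_R$, and further perturbing $(f_*,g_*)$ yields transversality with the Morse trajectory space of $f_*$ cut out by $u(\pm\infty)\in W^u(x_*^-,f_*)\times W^s(x_*^+,f_*)$. This is identical to the argument of Proposition \ref{prop:cohomology_op_1}, the only difference being that the constrained evaluation condition now comes from the perturbed moduli space rather than from the diagonal.

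To see that $\theta_R$ is a chain map I would analyze the boundary of one-dimensional components of $\Mt(R,x_1,\ldots,x_k,x_*^-,x_*^+)$. By compactness of $\Mt(R)$ (inherited from Proposition \ref{prop:compactness_moduli_spaces} via the openness of transversality used in constructing the perturbation), no Floer breaking or bubbling can occur; the only degenerations are Morse breakings of the gradient flow lines of $f_*$ at the ends, or of the flow lines of the $f_i$ that are built into the evaluation conditions. Counting these contributions modulo $2$ yields the identity $\p\circ\theta_R=\theta_R\circ\p$, where $\p$ on the tensor product is the standard product differential.

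For the chain homotopy to $\theta_0$ I would use the parametrized moduli space $\Mt[0,R]$, having chosen the abstract perturbation of $\F[0,R]$ so that its fibers over $r=0$ and $r=R$ agree with $\M(0)\cong\Sigma$ and $\Mt(R)$ respectively (this is possible since $\M(0)$ is already cut out transversally and transversality is open). Combining with $\EVt:\Mt[0,R]\pf M^k$ and the Morse data, define
\bea
K(x_1\otimes\ldots\otimes x_k\otimes x_*^-):=\sum_{x_*^+}\#_2\Mt[0,R](x_1,\ldots,x_k,x_*^-,x_*^+)\cdot x_*^+
\eea
using the evaluation constraint $\EVt(r,w)\in W^s(x_1,f_1)\times\ldots\times W^s(x_k,f_k)$, and count zero-dimensional components of the resulting parametrized space. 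The boundary of one-dimensional components decomposes into three types: fibers over $r=0$, contributing $\theta_0$; fibers over $r=R$, contributing $\theta_R$; and Morse breakings, contributing $\p K+K\p$. This yields the desired identity $\theta_R-\theta_0=\p K+K\p$.

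The main obstacle is the compactness of $\Mt[0,R]$ together with the compatibility of perturbations at $r=0$ and $r=R$; the first is supplied by Proposition \ref{prop:compactness_moduli_spaces} (crucially using the hypothesis $\|H\|<\wp(\Sigma,\alpha)$ to rule out Reeb breakings), while the second is a standard relative abstract perturbation argument. A secondary subtlety is transversality of $\EVt$ on the parametrized space, jointly with the Morse stable manifolds; this is achieved by allowing the abstract perturbation to vary in $r\in[0,R]$ while keeping the endpoint perturbations fixed, and then perturbing the $f_i,f_*$ generically.
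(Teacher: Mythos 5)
Your proposal follows essentially the same route as the paper: generic transversality of the Morse data and the abstract Fredholm perturbation for smoothness, compactness of $\Mt(R)$ for the chain-map property, and the parametrized moduli space $\Mt[0,R]$ with endpoint-compatible perturbations furnishing the cobordism/chain homotopy between $\theta_0$ and $\theta_R$. The only minor imprecision is the opening suggestion to make $\evt_R$ ``submersive''; what is actually needed (and what you then correctly invoke) is transversality of $\evt_R$ to the products of stable manifolds, which one achieves by jointly perturbing the Fredholm section and the Morse--Smale pairs rather than by forcing submersivity of the evaluation map.
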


\begin{proof}
For generic Morse functions $f_i$, generic Riemannian metrics $g_i$, and generic perturbation of the Fredholm section we have
\beq
W^s(x_1,f_1)\times\ldots\times W^s(x_k,f_k)\pitchfork \evt_R\;.
\eeq 
For generic $f_*$ and $g_*$ the stable and unstable manifolds are transversal to the evaluation maps $u\mapsto u(\pm\infty)$ and thus the moduli space $\Mt(R,x_1,\ldots,x_k,x_*^-,x_*^+)$ is smooth. The map $\theta_R$ is a chain map by standard Morse theory since $\Mt(R)$ is compact.

To prove that $\theta_R$ is chain homotopic to $\theta_0$ we recall that $\Mt[0,R]$ is perturbed while keeping the ends $\M(0)$ and $\Mt(R)$ fixed. Perturbing further (with fixed ends), the evaluation map $\EVt:\Mt[0,R]\to M^k$ will be transverse to all products of unstable manifolds. Thus, the moduli space $\Mt[0,R]$ together with $\EVt$ induces a cobordism between the moduli spaces  $\Mt(R,x_1,\ldots,x_k,x_*^\pm)$ and $\Mt(0,x_1,\ldots,x_k,x_*^\pm)$. This give rise to a chain homotopy operator between $\theta_0$ and $\theta_R$.
\end{proof}

\section{Proof of Theorem \ref{thm:main}}

We assume that $\psi$ has only finitely many leaf-wise intersections since otherwise we are done. We set $k:=\mathrm{cl}(\Sigma,M)$. It has been proved in \cite{Albers_Frauenfelder_Leafwise_intersections_and_RFH} that $\nu_{\text{leaf}}(\psi)\geq1$ if $||\psi||<\wp(\Sigma,\alpha)$. Thus, we may assume that $k\geq1$. We choose a Hamiltonian $H:[0,1]\times M\to\R$ with $H(t,\cdot)=0$ for $t\in[\tfrac12,1]$ such that $\phi_H^1=\psi$.  Let $\LI\subset\Sigma$ be the set of leaf-wise intersections of $\psi$.

We choose Morse functions $f_1,\ldots,f_k$ and Riemannian metrics $g_1,\ldots,g_k$ on $M$ and $f_*$, $g_*$ on $\Sigma$ with the following properties
\begin{enumerate}
 \item For $x_i\in\Crit(f_i)$ with Morse index $\Morse(x_i,f_i)\neq0$ we have $W^s(x_i,f_i)\cap\LI=\emptyset$.
 \item For all $n\in\N$ the evaluation maps $\evt_n:\Mt(n)\to M^k$ are transverse to the products of stable manifolds of the $(f_i,g_i)$.
 \item For all $n\in\N$ the evaluation maps at $\pm\infty$ are transverse to all stable and unstable manifolds of $(f_*,g_*)$.
\end{enumerate}
The second and third property holds for a fixed $n$ for a Baire set of Morse functions and Riemannian metrics as explained in the proofs of Propositions \ref{prop:cohomology_op_1} and \ref{prop:cohomology_op_2}. Intersecting these Baire sets over $n\in\N$ we see (2) and (3) is a generic property.  Avoiding finitely many unstable manifolds not of top dimension is clearly also a generic condition.

By definition of $k$ we find cohomology classes $a_1,\ldots,a_k\in\H^{\geq1}(M)$ with $\iota^*(a_1\cup\ldots\cup a_k)\neq0\in\H^*(\Sigma)$. Thus, the cohomology operation $\Theta$, see equation \eqref{eqn:cohomology_op_Theta}, is non-zero. According to Propositions \ref{prop:cohomology_op_1} and \ref{prop:cohomology_op_2} the cohomology operations $\theta_R$ then have to be non-zero for all $R\geq0$. Thus, we can find critical points $x_i\in\Crit(f_i)$ with Morse index $\Morse(x_i,f_i)\neq 0$ and $x_*^\pm\in\Crit(f_*)$ such that for all $n\in\N$ the moduli spaces 
\beq
\Mt(n,x_1,\ldots,x_k,x_*^-,x_*^+)\neq\emptyset\;.
\eeq
In fact, the choice of $x_i\in\Crit(f_i)$ will depend on $n$ in general. For notational convenience we suppress the $n$. In fact, there exists a subsequence $n_k$ for which we can choose $x_i$ fixed. We conclude
\beq
\Mt(n)\neq\emptyset\;.
\eeq
If the unperturbed moduli space $\M(n)$, see equation \eqref{eqn:moduli_space_M(R)}, were empty then for sufficiently small perturbations of the Fredholm section also the moduli space $\Mt(n)$ would be empty. Indeed, if $\M(n)=\emptyset$ then the corresponding Fredholm section is transverse. Since $\M(n)$ is compact a sufficiently small perturbation of the Fredholm section remains empty and thus transverse. Therefore, we conclude that
\beq
\M(n)\neq\emptyset\;.
\eeq
Now we can choose a sequence $w_n\in\M(n)$, that is, $w_n$ solves equation \eqref{eqn:gradient_flow_equation} with Hamiltonian perturbation $\beta_n(s)H(t,x)$. Since $\Mt(n)\cap_{\evt_n}\big(W^s(x_1,f_1)\times\ldots\times W^s(x_k,f_k)\big)\neq\emptyset$ and all evaluation maps are transverse a similar argument as above allows us to conclude that 
\beq
\ev_n(w_n)\in{W^s(x_1,f_1)}\times\ldots\times{W^s(x_k,f_k)}\;.
\eeq
By Lemma \ref{lem:action_energy_bounds} the action $\A^{\Mp_n}(w_n(s))$ is uniformly bounded and we can apply Theorem \ref{thm:action_bounds_imply_compactness}. We consider the following sequences
\beq
w_n(s+jn),\quad j=0,\ldots, k+1\;.
\eeq
For $j=1,\ldots,k$ these sequences converge (after choosing a subsequence) to solutions $\varpi^j=(u^j,\eta^j):\R\to\L_M\times\R$ of
\beq
\left.
\begin{aligned}
&\partial_su^j+J(s,t,u^j)\big(\partial_tu^j-\eta^j X_F(t,u)-X_{H}(t,u^j)\big)=0\\[1ex]
&\partial_s\eta^j-\int_0^1F(t,u^j)dt=0
\end{aligned}
\;\;\right\}
\eeq
of energy
\beq
E(\varpi^j)\leq\limsup E(w_n)\leq||H||\;.
\eeq
The sequence $w_n(s)$ converges to a solution $\varpi^0=(u^0,\eta^0)$ of
\beq
\left.
\begin{aligned}
&\partial_su^0+J(s,t,u^0)\big(\partial_tu^0-\eta^0 X_F(t,u)-\beta^+_\infty(s)X_{H}(t,u^0)\big)=0\\[1ex]
&\partial_s\eta^0-\int_0^1F(t,u^0)dt=0
\end{aligned}
\;\;\right\}
\eeq
and the sequence $w_n(s+(k+1)n)$ converges to a solution $\varpi^{k+1}=(u^{k+1},\eta^{k+1})$ of
\beq
\left.
\begin{aligned}
&\partial_su^{k+1}+J(s,t,u^{k+1})\big(\partial_tu^{k+1}-\eta^{k+1} X_F(t,u)-\beta^-_\infty(s)X_{H}(t,u^{k+1})\big)=0\\[1ex]
&\partial_s\eta^{k+1}-\int_0^1F(t,u^{k+1})dt=0
\end{aligned}
\;\;\right\}
\eeq
where $\beta^\pm_\infty$ are defined at the beginning of section \ref{sec:moduli_space}. In particular, 
\beq
(y^j_\pm,\eta^j_\pm):=\varpi^j(\pm\infty)\in\Crit\A^{(F,H)},\quad j=1,\ldots,k
\eeq 
and
\beq
(y^0_+,\eta^0_+):=\varpi^0(+\infty),\;(y^{k+1}_-,\eta^{k+1}_-):=\varpi^{k+1}(-\infty)\in\Crit\A^{(F,H)}
\eeq
are critical points. Thus, $y^j_\pm(0)$ are leaf-wise intersections, see Proposition \ref{prop:critical_points_give_LI}. Moreover, they are ordered by action as follows
\bea
\A^{(F,H)}(y^0_+,\eta^0_+)\geq\A^{(F,H)}(y^1_-,\eta^1_-)&\geq\A^{(F,H)}(y^1_+,\eta^1_+)\geq\A^{(F,H)}(y^2_-,\eta^2_-)\geq\ldots\ldots\\
&\ldots\ldots\geq\A^{(F,H)}(y^k_+,\eta^k_+)\geq\A^{(F,H)}(y^{k+1}_-,\eta^{k+1}_-)\;.
\eea
This follows directly from the choice of the sequences $w_n(s+jn)$. If $\A^{(F,H)}(\varpi^j_-)=\A^{(F,H)}(\varpi^j_+)$ then by Lemma \ref{lemma:energy_estimate_for_gradient_lines} $E(\varpi^j)=\A^{(F,H)}(\varpi^j_-)-\A^{(F,H)}(\varpi^j_+)=0$ and thus $\p_s \varpi^j(s)=0$. We recall that $\ev_n(w_n)\in{W^s(x_1,f_1)}\times\ldots\times{W^u(x_k,f_k)}$. By $C^\infty_{loc}$-convergence of $w_n(s+jn)\to\varpi^j(s)=(u^j,\eta^j)$ we conclude
\beq
u^j(0,0)\in\overline{W^s(x_j,f_j)}
\eeq
where $\varpi^j=(u^j,\eta^j)$. Now, since $\p_s\varpi^j(s)=0$, i.e.~$\varpi^j(s)=(y^j_-,\eta^j_-)=(y^j_+,\eta^j_+)$, we have
\beq
u^j(0,0)=y^j_-(0)=y^j_+(0)\in\overline{W^s(x_j,f_j)}\;.
\eeq
But this contradicts assumption (1) that $W^s(x_j,f_j)\cap\LI=\emptyset$ since $\p\overline{W^s(x_j,f_j)} $ is composed out of stable manifolds of critical points of higher indices. 
Therefore, $\p_s\varpi^j\neq0$ for all $j=1,\ldots,k$ and we conclude
\beq
\A^{(F,H)}(y^j_-,\eta^j_-)>\A^{(F,H)}(y^j_+,\eta^j_+),\quad j=1,\ldots,k\;.
\eeq
If we denote $(y_l,\eta_l):=(y^l_-,\eta^l_-)$ for $l=1,\ldots,k$ and $(y_{k+1},\eta_{k+1}):=(y^k_+,\eta^k_+)$ we have
\beq\label{eqn:inequ_chain_for_action_values}
\A^{(F,H)}(y^0_+,\eta^0_+)\geq\A^{(F,H)}(y_1,\eta_1)>\ldots>\A^{(F,H)}(y_{k+1},\eta_{k+1})\geq\A^{(F,H)}(y^{k+1}_-,\eta^{k+1}_-)\;.
\eeq
In particular, all $(y_l,\eta_l)$ are different as critical points of $\A^{(F,H)}$. To finally conclude that the leaf-wise intersections $y_1(0),\ldots,y_{k+1}(0)$ are all distinct we recall from Proposition \ref{prop:critical_points_give_LI} that two critical points of $\A^{(F,H)}$ can give rise to the same leaf-wise intersection only if some $y_j(0)$ is a periodic leaf-wise intersection. If $y_l(0)=y_{l'}(0)$ then as in the proof of \cite[Lemma 2.19]{Albers_Frauenfelder_Leafwise_intersections_and_RFH} we conclude that
\beq\label{eqn:ineq_periodic_leafwise_intersection}
|\A^{(F,H)}(y_l,\eta_l)-\A^{(F,H)}(y_{l'},\eta_{l'})|\geq\wp(\Sigma,\alpha)>||H||\;.
\eeq
On the other hand we estimate for $(y^0_+,\eta^0_+)=\varpi^0(+\infty)$
\bean
\A^{(F,H)}(\varpi^0(+\infty))-\A^{(F,0)}(\varpi^0(-\infty))&=\int_{-\infty}^\infty\frac{d}{ds}\A^{(F,\beta^+_\infty(s) H)}(\varpi^0(s))ds\\
&=\int_{-\infty}^\infty d\A^{(F,H)}(\varpi^0)\cdot\p_s\varpi^0\; ds+\int_{-\infty}^\infty \frac{\p\A^{(F,\beta^+_\infty(s) H)}}{\p s}(\varpi^0)ds\\
&=-\int_{-\infty}^\infty||\p_s \varpi^0||^2 ds-\int_{-\infty}^\infty\int_0^1 \frac{\p\big( \beta^+_\infty(s) H\big)}{\p s}(t,u^0)dtds\\
&\leq-\int_{-\infty}^\infty\Big(\underbrace{\frac{\p}{\p s} \beta^+_\infty(s)}_{\geq0}\Big)\int_0^1H(t,u^0(s,t))dtds\\
&\leq -\underbrace{\int_{-\infty}^\infty\left(\frac{\p}{\p s}\beta^+_\infty(s)\right)ds}_{=1}\cdot\int_0^1\min_MH(t,\cdot)dt\\
&=-\int_0^1\min_MH(t,\cdot)dt\;.
\eea
The critical point $\varpi^0(-\infty)$ of $\A^{(F,0)}$ corresponds either to a Reeb orbit or constant loop in $\Sigma$. Since $E(\varpi^0)\leq||H||$ the same argument as in Proposition \ref{prop:compactness_moduli_spaces} rules out the former case. In particular, $\A^{(F,0)}(\varpi^0(-\infty))=0$ and therefore
\beq
\A^{(F,H)}(y^0_+,\eta^0_+)\leq-\int_0^1\min_MH(t,\cdot)dt\;.
\eeq 
Analogously, for $(y^{k+1}_-,\eta^{k+1}_-)=\varpi^{k+1}(-\infty)$
\beq
\A^{(F,H)}(y^{k+1}_-,\eta^{k+1}_-)\geq-\int_0^1\max_MH(t,\cdot)dt
\eeq 
is derived. Thus, if we assume  $y_l(0)=y_{l'}(0)$ we have the following inequalities
\bea
||H||&=-\int_0^1\min_MH(t,\cdot)dt+\int_0^1\max_MH(t,\cdot)dt\\
&\geq \A^{(F,H)}(y^0_+,\eta^0_+)-\A^{(F,H)}(y^{k+1}_-,\eta^{k+1}_-)\\
&\geq |\A^{(F,H)}(y_l,\eta^l)-\A^{(F,H)}(y_{l'},\eta^{l'})|\\
&\geq\wp(\Sigma,\alpha)\\
&>||H||
\eea
where the second inequality follows from \eqref{eqn:inequ_chain_for_action_values} and the last two from \eqref{eqn:ineq_periodic_leafwise_intersection}. This contradiction shows that the the leaf-wise intersection points $y_1(0),\ldots,y_{k+1}(0)$ are all distinct. This finishes the proof of Theorem \ref{thm:main}.

\subsubsection*{Acknowledgments}
This article was written during visits of the authors at the Institute for Advanced Study, Princeton. The authors thank the Institute for Advanced Study for their stimulating working atmospheres. The authors are grateful to Alberto Abbondandolo, Urs Frauenfelder, and Helmut Hofer for helpful discussions. 

This material is based upon work supported by the National Science Foundation under agreement No.~DMS-0635607 and DMS-0903856. Any opinions, findings and conclusions or recommendations expressed in this material are those of the authors and do not necessarily reflect the views of the National Science Foundation.

\bibliographystyle{amsalpha}
\bibliography{../../../Bibtex/bibtex_paper_list}
\end{document}